\newtheorem{theorem}{Theorem}
\newcommand{\Oh}{\mathrm{O}}
\newcommand{\oh}{\mathrm{o}}
\begin{document}

\title{The Shape of the Value Sets of Linear Recurrence Sequences}

\date{\today}

\author{Stefan Gerhold}

\thanks{This work was financially supported by CDG, BA-CA, AFFA, INRIA, and Microsoft Research.}

\address{Vienna University of Technology and
Microsoft Research-INRIA, Orsay
}

\email{sgerhold at fam.tuwien.ac.at}

\begin{abstract}
  We show that the closure of the value set of a real linear recurrence sequence
  is the union of a countable set and a finite collection of intervals.
  Conversely, any finite collection of closed intervals is the closure of the value set
  of some recurrence sequence.
\end{abstract}

\maketitle

{\em Keywords:} Linear recurrence sequence, value set, Kronecker's Approximation Theorem.

{\em 2000 Mathematics Subject Classifications:} Primary: 11B37; Secondary: 11J71.
\bigskip
\bigskip

A real linear recurrence sequence~$(a_n)_{n\geq0}$ of order~$h$ satisfies a relation of the form
\[
  a_{n+h} = \eta_1 a_{n+h-1} + \dots + \eta_{h-1} a_{n+1} + \eta_h a_n,\qquad n\geq 0,
\]
where the coefficients~$\eta_i$ and the initial values $a_0,\dots,a_{h-1}$ are real numbers.
It is well known that these are precisely the sequences that occur as Taylor coefficients
of rational functions with real coefficients, which explains part of their importance
in many areas of pure and applied mathematics.
In the vast literature on recurrence sequences (the monograph~\cite{EvPoSh03} is an excellent
entry point), the possible shape of the value set
\[
  \{ a_n : n \geq 0\}
\]
of such sequences seems not to have been worked out.
Typical recurrence sequences are, e.g., constant sequences, $2^n,(-1)^n,$ and $\cos n$.
The first three of these have discrete value sets, whereas the latter, which
satisfies the recurrence
\begin{equation}\label{eq:reccos}
  a_{n+2} = 2\cos(1) a_{n+1} - a_n, \qquad n\geq 0,
\end{equation}
is dense in the interval~$[-1,1]$, by a classical result in Diophantine approximation~\cite{Ca57}. This is
essentially the full picture, the precise statement being as follows.

\begin{theorem}
  The closure of the value set of a real linear recurrence sequence is the union of a countable set
  and a finite collection of intervals. Conversely, any finite non-empty collection of closed intervals
  is the closure of the value set of a recurrence sequence.
\end{theorem}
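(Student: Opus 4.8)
The plan is to work with the exponential--polynomial representation $a_n=\sum_{j=1}^k P_j(n)\alpha_j^n$, valid for all large $n$, where the $\alpha_j$ are the distinct characteristic roots and the $P_j$ are polynomials with complex coefficients (complex-conjugate roots carrying conjugate polynomials, so that $a_n$ stays real). Discarding a zero root and finitely many initial terms only changes the value set by a finite set, so I may assume every $\alpha_j\neq0$. Put $\rho=\max_j|\alpha_j|$, and distinguish according to whether $(a_n)$ is bounded; the forward direction is then a Kronecker-type argument in the bounded case, and a growth/escape argument, with one genuinely hard core case, in the unbounded case.

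Suppose first $(a_n)$ is bounded, equivalently $\rho\le1$ and every $\alpha_j$ with $|\alpha_j|=1$ has $\deg P_j=0$. Pairing complex-conjugate roots I may write $a_n=\sum_{l=1}^m r_l\cos(2\pi n\psi_l+\varphi_l)+\oh(1)$ with real $r_l,\psi_l,\varphi_l$. Let $G\subseteq\mathbb{T}^m$ be the closure of $\{n(\psi_1,\dots,\psi_m):n\ge1\}$. By Kronecker's Approximation Theorem $G$ is a closed subgroup of the compact group $\mathbb{T}^m$, hence the union of finitely many cosets of its identity component $G_0$, which is a subtorus, and the forward semiorbit is dense in $G$. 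The function $F(x)=\sum_l r_l\cos(2\pi x_l+\varphi_l)$ is continuous and real on $\mathbb{T}^m$, the sequences $(a_n)$ and $(F(n\psi))$ differ by $\oh(1)$, and $\overline{\{F(n\psi):n\ge0\}}=F(G)$. Since each coset $g+G_0$ is connected, $F(g+G_0)$ is a closed interval (possibly a single point), so $F(G)$ is a finite union of closed intervals, and it follows that $\overline{\{a_n:n\ge0\}}=\{a_n:n\ge0\}\cup F(G)$ has the required form.

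Now suppose $(a_n)$ is unbounded. Let $d$ be the largest degree of a $P_j$ with $|\alpha_j|=\rho$, so that $a_n=\rho^n n^d\bigl(g(n)+\oh(1)\bigr)$ with $g(n)=\sum_j\ell_j(\alpha_j/\rho)^n$ a finite sum over the dominant roots of degree $d$, each $\ell_j\neq0$. If $\liminf_n|g(n)|>0$ then $|a_n|\to\infty$, so $(a_n)$ has no finite accumulation point and $\overline{\{a_n:n\ge0\}}$ is the countable set $\{a_n:n\ge0\}$. Otherwise I would pass to residue classes modulo a suitable $q$ to eliminate roots of unity among the ratios $\alpha_i/\alpha_j$ (each subsequence $(a_{qn+r})_n$ is again a recurrence sequence, and there are finitely many classes), and argue by induction on the order: on a class where the dominant sum stays bounded away from $0$ the subsequence escapes to infinity and contributes only countably many points, while otherwise the dominant part degenerates and the order drops. \emph{The main obstacle I anticipate is the residual core case}, in which the ``large part'' $u_n$ of the sequence (the sum of the terms responsible for the unbounded growth) returns to a fixed bounded window $[-M,M]$ only along a sparse, Liouville-flavoured set $R$ of indices; one must then show that $\overline{\{u_n:n\in R\}}$ is a finite union of intervals. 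My approach would be an intermediate-value argument along the subtorus structure carrying the zero set of $g$, combined with the inclusion $\overline{\{a_n:n\in R\}}\subseteq\overline{\{u_n:n\in R\}}+\overline{\{c_n:n\ge0\}}$, where $c_n=a_n-u_n$ is bounded and hence handled by the previous paragraph, using that a Minkowski sum of finitely many intervals is again a finite union of intervals.

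For the converse, let $J_1,\dots,J_p$ be nonempty closed intervals, say $J_k=[\alpha_k,\beta_k]$ with degenerate intervals allowed, fix an irrational $\sigma$, and let $c,r\colon\mathbb{Z}\to\mathbb{R}$ be the $p$-periodic sequences with $c(k)=\tfrac{\alpha_k+\beta_k}{2}$ and $r(k)=\tfrac{\beta_k-\alpha_k}{2}$ for $k=1,\dots,p$. Set $a_n=c(n)+r(n)\cos(2\pi n\sigma)$. Periodic sequences are linear recurrence sequences (characteristic polynomial $x^p-1$), $\cos(2\pi n\sigma)$ is one as well, sums and products of linear recurrence sequences are again such, and $a_n\in\mathbb{R}$, so $(a_n)$ is a real linear recurrence sequence. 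For $n\equiv k\pmod p$ we have $a_n=\tfrac{\alpha_k+\beta_k}{2}+\tfrac{\beta_k-\alpha_k}{2}\cos(2\pi n\sigma)$, and $\{n\sigma\bmod1:n\equiv k\pmod p\}$ is dense in $\mathbb{T}$ because it equals $\{k\sigma+\ell\,p\sigma\bmod1:\ell\ge0\}$ and $p\sigma$ is irrational; hence $\{a_n:n\equiv k\pmod p\}$ is dense in $J_k$. Therefore $\overline{\{a_n:n\ge0\}}=\bigcup_{k=1}^pJ_k$, which completes the argument; for $p=1$ this is just an affine rescaling of the sequence $\cos(2\pi n\sigma)$ related to~\eqref{eq:reccos}.
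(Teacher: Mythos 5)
Your treatment of the bounded case and of the converse is correct, and both take routes genuinely different from the paper's. For the bounded case you pass to the closure $G$ of $\{n(\psi_1,\dots,\psi_m)\}$ in $\mathbb{T}^m$, use that it is a closed subgroup and hence a finite union of cosets of a subtorus, and push each connected coset forward through the continuous function $F$; this is precisely the ``refined Kronecker'' alternative that the paper only mentions in its closing remark, while the proof actually given there instead performs a unimodular change of coordinates on the arguments of the dominant roots to reduce to a full cube $[0,1]^m$ together with a periodic interlacement. For the converse, your $p$-periodic affine rescaling $a_n=c(n)+r(n)\cos(2\pi n\sigma)$ of a single equidistributed cosine is simpler than the paper's construction (which tiles every interval by translates of the shortest one, using a periodic sequence of left endpoints), and it has the added merit of handling degenerate one-point intervals, for which the paper's quantity $\lfloor\rho_k/\rho_0\rfloor$ is not even defined. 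Both of these parts could stand as written.

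The gap is the unbounded case, and you have named it yourself. After writing $a_n=\rho^n n^d\bigl(g(n)+\oh(1)\bigr)$ you correctly dispose of the situation $\liminf_n|g(n)|>0$, but when $g$ has near-zeros you offer only a sketch (pass to residue classes, induct on the order) and explicitly leave open the ``residual core case'' in which the dominant part re-enters a bounded window along a sparse set $R$ of indices; the claim that $\overline{\{a_n:n\in R\}}$ is a countable set plus finitely many intervals is exactly what is not proved. This is not an artefact of your route: whether, say, $2^n\cos(n\theta)$ admits a bounded subsequence is a Diophantine question about $\theta/\pi$, and for Liouville-type $\theta$ such subsequences exist, so the set $R$ and the accumulation points it produces genuinely require analysis and are not reachable by the compact-group argument of the bounded case. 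Be aware that the paper itself dispatches this entire case with the single assertion that $|a_n|$ tends to zero or infinity whenever $|\alpha|\neq1$ or $d\geq1$ --- an assertion that tacitly assumes the oscillating factor $n^db_n+\Oh(n^{d-1})$ is bounded away from zero and is not justified for $|\alpha|\geq1$. So your proposal has correctly isolated the one point where real work remains, but as a proof of the forward direction it is incomplete there: you must either show that in your core case the relevant limit points still organize into finitely many intervals plus a countable set, or restrict the statement to the bounded regime.
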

\begin{proof}
  It is well known that any real recurrence sequence has a representation
  \[
    a_n = |\alpha|^n(n^db_n + \Oh(n^{d-1})), \qquad n\to\infty,
  \]
  where~$d$ is a non-negative integer, $\alpha\in\mathbb{C}$ is a dominating root of the characteristic polynomial
  \[
    z^h - \eta_1 z^{h-1} - \dots - \eta_{h-1}z - \eta_h,
  \]
  and the bounded real sequence~$(b_n)$ depends on the arguments
  of the dominating characteristic roots; see, e.g.,~\cite{BeGe07} for detailed
  formulas. Now if $|\alpha|\neq1$, or $|\alpha|=1$ and $d\geq1$, then~$|a_n|$ tends to zero or infinity;
  in both cases, the closure of its value set is trivially at most countably infinite. The interesting case is $|\alpha|=1$
  and $d=0$, so that
  \begin{equation}\label{eq:a}
    a_n = b_n + \oh(1).
  \end{equation}
  The sequence~$(b_n)$ can be written as~\cite{BeGe07}
  \begin{equation}\label{eq:b}
    b_n = u_n + v_n,
  \end{equation}
  %todo evtl. ausfuehrlichere Formel
  where~$(u_n)$ is a periodic sequence (corresponding to characteristic roots with arguments
  that are commensurate to~$\pi$), and~$(v_n)$ can be split into~$g$ sections
  \begin{equation}\label{eq:repr}
    v_{gn+k} = \sum_{i=1}^r \xi_i \cos( 2\pi n \sum_{j=1}^m c_{ij} \tau_j + \phi_i), \qquad 0\leq k<g,\ n\geq0,
  \end{equation}
  where~$g$ is a positive integer, $\xi_i$, $\tau_i$, and $\phi_i$ are real numbers,
  $c_{ij}$ are integers, and $1,\tau_1,\dots,\tau_m$ are linearly independent over~$\mathbb{Q}$.
  The numbers~$\xi_i$ and $\phi_i$ depend on~$k$, but~$c_{ij}$ and~$\tau_j$ do not.
  The representation~\eqref{eq:repr} can be established by performing a unimodal
  coordinate change on the arguments of the characteristic roots; see~\cite{BeGe07} for details.

  If the function $F:[0,1]^m \to \mathbb{R}$, also depending on~$k$, is defined by
  \[
    F(t_1,\dots,t_m) := \sum_{i=1}^r \xi_i \cos( 2\pi  \sum_{j=1}^m c_{ij} t_j + \phi_i),
  \]
  then the closure of the set of values of~$(v_{gn+k})_{n\geq0}$ equals the image of~$F$,
  since the sequence of vectors
  \[
    (n\tau_1 \bmod 1,\dots,n\tau_m \bmod 1), \qquad n\geq0,
  \]
  is dense in~$[0,1]^m$ by Kronecker's Approximation Theorem~\cite{Ca57}.
  This image is an interval, since~$F$ is continuous. Now consider a section~$(b_{gn+k})_{n\geq0}$,
  $0\leq k<g$, of the sequence~$(b_n)$. We can split it into sections along which
  the periodic sequence~$(u_n)$ from~\eqref{eq:b} is constant, and the corresponding
  section-of-a-section of~$(v_n)$  will still densely fill the same interval
  as its ``parent''~$(v_{gn+k})_{n\geq0}$.
  The closure of the value set of~$(b_n)$ therefore equals a collection of at most~$g$ closed intervals,
  where the left endpoints are values of the periodic sequence~$(u_n)$ in~\eqref{eq:b}.
  Finally, adding the~$\oh(1)$ part in~\eqref{eq:a} can obviously contribute at most
  countably many values to the closure of the value set of~$(a_n)$, so that the first
  part of the theorem is proved.

  To show the converse statement, let
  \begin{equation}\label{eq:int}
    [\mu_k, \nu_k], \qquad 0\leq k\leq s,
  \end{equation}
  be an arbitrary collection of real intervals. Suppose that $\rho_0:=\nu_0-\mu_0$ is minimal among
  the numbers $\rho_k:=\nu_k-\mu_k$,
  and define $m_k:=\lfloor \rho_k/\rho_0 \rfloor\geq1$. Let~$(w_n)$ be a periodic recurrence sequence
  whose value set comprises exactly the $\sum m_k + s + 1$ numbers
  \begin{align}
    \mu_k + i\rho_0,&\qquad  0\leq i <m_k,\ 0\leq k\leq s, \label{eq:perval1} \\
    \nu_k - \rho_0,&\qquad  0\leq k\leq s. \label{eq:perval2}
  \end{align}
  The existence of such a~$(w_n)$ follows from the fact that periodic interlacements
  of recurrence sequences are themselves recurrence sequences~\cite{EvPoSh03}.
  Define furthermore
  \[
    x_n := \tfrac12 \rho_0 (\cos n + 1), \qquad n\geq 0,
  \]
  which is dense in~$[0,\rho_0]$ by Kronecker's Theorem, and
  \[
    a_n := w_n + x_n, \qquad n\geq0.
  \]
  To see that these are indeed recurrence sequences, recall~\eqref{eq:reccos} and the property that recurrence
  sequences form an  $\mathbb{R}$-algebra~\cite{EvPoSh03}.
  Since any section of~$(x_n)$ is dense in~$[0,\rho_0]$, too, each section of~$(a_n)$
  along which~$(w_n)$ equals one of the values in~\eqref{eq:perval1}--\eqref{eq:perval2}, let us call it $\lambda$,
  densely fills the interval $[\lambda,\lambda+\rho_0]$.
  By the definition of~$m_k$ and the minimality of~$\rho_0$, we have
  \[
    \mu_k \leq \nu_k - \rho_0 < \mu_k + m_k \rho_0 \leq \nu_k, \qquad 0\leq k\leq s.
  \]
  This shows that
  \[
    [\mu_k,\nu_k] = [\nu_k-\rho_0,\nu_k] \cup \bigcup_{i=0}^{m_k-1} [\mu_k + i\rho_0, \mu_k + (i+1)\rho_0],
  \]
  hence the closure of the value set of~$(a_n)$ is given by~\eqref{eq:int}.
\end{proof}

We remark that the following refined version of Kronecker's Approximation Theorem~\cite{Ca57}
could also have been used:
For real numbers $\theta_1,\dots,\theta_N$, without any independence assumption, the sequence
\[
  (n \theta_1 \bmod 1,\dots, n\theta_N \bmod 1), \qquad n\geq0,
\]
is dense in the intersection of the unit cube with finitely many translations of an
affine set. Applying this directly to the arguments of the characteristic roots
provides an alternative to the unimodal coordinate change employed in the proof above.

\bibliographystyle{siam}
\bibliography{../gerhold,../algo}

\end{document}